\def\NZQ{\mathbb}               
\def\NN{{\NZQ N}}
\def\ZZ{{\NZQ Z}}
\newtheorem{Theorem}{Theorem}[section]
\newtheorem{Lemma}[Theorem]{Lemma}
\newtheorem{Proposition}[Theorem]{Proposition}
\newtheorem{Question}[Theorem]{Question}
\let\epsilon\varepsilon
\let\phi=\varphi
\let\kappa=\varkappa
\begin{document}

\title{Extensions of valuations to the Henselization and completion}
\author{Steven Dale Cutkosky}
\thanks{partially supported by NSF grant DMS-1700046}

\address{Steven Dale Cutkosky, Department of Mathematics,
University of Missouri, Columbia, MO 65211, USA}
\email{cutkoskys@missouri.edu}


\maketitle

\section{Introduction}

Suppose that $K$ is a field. Associated to a valuation $\nu$ of $K$  is a value group $\Phi_{\nu}$ and  a valuation ring $V_{\nu}$ with maximal ideal $m_{\nu}$. Let $R$ be a local domain with quotient field $K$. We say that $\nu$ dominates $R$ if $R\subset V_{\nu}$ and $m_{\nu}\cap R=m_R$ where $m_R$ is the maximal ideal of $R$. We have an associated semigroup 
$$
S^R(\nu)=\{\nu(f)\mid f\in R\setminus(0)\},
$$
 as well as the associated graded ring of $R$ along $\nu$
\begin{equation}\label{eqN31}
{\rm gr}_{\nu}(R)=\bigoplus_{\gamma\in \Phi_{\nu}}\mathcal P_{\gamma}(R)/\mathcal P^+_{\gamma}(R)=\bigoplus_{\gamma\in S^{R}(\nu)}\mathcal P_{\gamma}(R)/\mathcal P^+_{\gamma}(R)
\end{equation}
which is defined by Teissier in \cite{T1}. Here 
$$
\mathcal P_{\gamma}(R)=\{f\in R\mid \nu(f)\ge \gamma\}\mbox{ and }\mathcal P^+_{\gamma}(R)=\{f\in R\mid \nu(f)> \gamma\}.
$$ 
This ring plays an important role in local uniformization of singularities (\cite{T1} and \cite{T2}).
The ring ${\rm gr}_{\nu}(R)$ is a domain, but it is often not Noetherian, even when $R$ is.  In fact, a necessary condition for ${\rm gr}_{\nu}(R)$ to be Noetherian is that $\Phi_{\nu}$ be a finitely generated group.

In this paper, we answer the following question, which is a natural generalization of  local uniformization.

\begin{Question}\label{Question1} Suppose that $R$ is a Noetherian  local domain which is dominated by a valuation $\nu$. Does there exist a regular local ring $R'$ of the quotient field $K$ of $R$ such that $\nu$ dominates $R'$ and $R'$ dominates $R$, a prime ideal $p$ of the $m_{R'}$-adic completion $\widehat{R'}$ of $R'$  such that $p'\cap R'=(0)$ and  an extension $\hat\nu$ of $\nu$ to 
the quotient field of $\widehat{R'}/p$ which dominates $\widehat{R'}/p$ such that
$$
{\rm gr}_{\nu}(R')\cong {\rm gr}_{\hat\nu}(\widehat{R'}/p)?
$$
\end{Question}

 A nonzero prime ideal $p$ may be necessary to obtain the conclusions of Question \ref{Question1}, as is shown in \cite{S} and \cite{HS}.

 If $\nu$ has rank 1,  then we easily obtain a prime $p$ in the completion of $R$ such that  
$$
{\rm gr}_{\nu}(R)\cong {\rm gr}_{\hat\nu}(\hat R/p)
$$
 as we now indicate.

Suppose that $R$ is a Noetherian local domain which is dominated by a rank 1 valuation $\nu$. For $f\in \hat R$, we write $\nu(f)=\infty$ if there exists a Cauchy sequence $\{f_n\}$ in $R$ which converges to $f$, and such that
$\lim_{n\rightarrow \infty}\nu(f_n)=\infty$.
We define (\cite[Definition 5.2]{CG}) a prime ideal
$$
P(\hat R)_{\infty}=\{f\in \hat R\mid \nu(f)=\infty\}
$$
in $\hat R$. We then have a canonical immediate extension $\hat\nu$ of $\nu$ to ${\rm QF}(\hat R/P(\hat R)_{\infty})$ which dominates $\hat R/P(\hat R)_{\infty}$.
 
 The following lemma appears in \cite{GAST}.

\begin{Lemma}\label{Lemma12} Suppose that $\nu$ is  a rank 1 valuation of a field $K$ and $R$ is a Noetherian local domain which is dominated by $\nu$. Let $\hat\nu$ be the canonical extension of $\nu$ to ${\rm QF}(\hat R/P(\hat R)_{\infty})$ which dominates $\hat R/P(\hat R)_{\infty}$. Then the inclusion $R\rightarrow \hat R/P(\hat R)_{\infty}$ induces an isomorphism 
$$
{\rm gr}_{\nu}(R)\cong {\rm gr}_{\hat\nu}(\hat R/P(\hat R)_{\infty}).
$$
\end{Lemma}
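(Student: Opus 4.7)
The plan is to show, for each $\gamma\in S^R(\nu)$, that the natural map
$$
\mathcal{P}_\gamma(R)/\mathcal{P}^+_\gamma(R)\longrightarrow \mathcal{P}_\gamma(\hat R/P(\hat R)_\infty)/\mathcal{P}^+_\gamma(\hat R/P(\hat R)_\infty)
$$
induced by $R\to \hat R/P(\hat R)_\infty$ is an isomorphism. The map is well defined because $\hat\nu$ restricts to $\nu$ on $R$, and $R$ actually embeds in $\hat R/P(\hat R)_\infty$ because $\nu$ is finite on $R\setminus (0)$, so $R\cap P(\hat R)_\infty=(0)$. Injectivity on each graded piece is then immediate: if $f\in R$ with $\nu(f)\ge \gamma$ satisfies $\hat\nu(\bar f)>\gamma$, then $\hat\nu(\bar f)=\nu(f)>\gamma$, so $f\in \mathcal{P}^+_\gamma(R)$.

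For surjectivity, let $\bar f\in \mathcal{P}_\gamma(\hat R/P(\hat R)_\infty)$ and set $\gamma'=\hat\nu(\bar f)\ge \gamma$. The case $\gamma'>\gamma$ is trivial (the class of $\bar f$ is zero), so assume $\gamma'=\gamma$. Lift $\bar f$ to $f\in \hat R$ and choose a sequence $\{f_n\}\subset R$ converging to $f$ in the $m_R$-adic topology; by the definition of $\hat\nu$ we may arrange that $\lim_n \nu(f_n)=\gamma$. The rank one hypothesis now enters decisively: since $m_R$ is finitely generated and $\nu$ is strictly positive on each generator, there exists $\alpha>0$ with $\nu(m_R^n)\ge n\alpha$, so the $m_R$-adic topology on $R$ refines the $\nu$-adic topology. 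Consequently $\{f_n\}$ is also $\nu$-Cauchy, i.e.\ $\nu(f_m-f_n)\to \infty$.

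At this point I invoke the ultrametric inequality together with the archimedean property of rank one value groups. Since $\nu(f_n)\to \gamma$ is finite while $\nu(f_n-f_m)$ eventually exceeds $\gamma$, the identity $f_n=f_m+(f_n-f_m)$, combined with the rule $\nu(a+b)=\min(\nu(a),\nu(b))$ whenever $\nu(a)\neq \nu(b)$, forces $\nu(f_n)=\nu(f_m)=\gamma$ for all sufficiently large $m,n$. Fix such an $n$; then $f_n\in \mathcal{P}_\gamma(R)$, and $\bar f-f_n$ is represented by the $m_R$-adic limit of $(f_m-f_n)_{m\ge n}$, giving $\hat\nu(\bar f-f_n)=\lim_m \nu(f_m-f_n)>\gamma$. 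Hence $\bar f$ and $f_n$ have the same class, completing surjectivity. The main obstacle in this plan is the topology comparison together with the eventual constancy of $\nu(f_n)$, both of which rely essentially on rank one; in higher rank the archimedean step fails and one cannot in general extract a representative from $R$ itself.
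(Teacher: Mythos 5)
Your proof is correct and follows essentially the same route as the paper: approximate a lift of $\bar f$ by elements $f_n\in R$ in the $m_R$-adic topology and use the rank one (archimedean) hypothesis, via $\nu(m_R^m)\ge m\,\nu(m_R)>\gamma$ for $m\gg 0$, to conclude that a sufficiently close approximant $f_n$ has the same initial form as $\bar f$. The paper states this more tersely (choosing $m$ with $m\,\nu(m_R)>\hat\nu(h)$ and noting $f_n-h\in m_R^m\hat R$ forces ${\rm in}_\nu(f_n)={\rm in}_{\hat\nu}(h)$), but your detour through $\nu$-Cauchy sequences and eventual constancy of $\nu(f_n)$ is the same idea spelled out.
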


\begin{proof}  Suppose $h\in \hat R\setminus P(\hat R)_{\infty}$. There exists a Cauchy sequence $\{f_n\}$ in $R$ such that
$\lim_{n\rightarrow \infty}f_n=h$. Let $m$ be a positive integer such that $m\nu(m_R)>\hat\nu(h)$ (where $\nu(m_R)=\min\{\nu(g)\mid g\in m_R\}$). There exists $n_0$ such that $f_n - h \in m_R^m\hat R$ for $n\ge n_0$. Then ${\rm in }_{\nu}(f_n)={\rm in }_{\hat \nu}(h)$ for $n\ge n_0$.
\end{proof}

From Lemma \ref{Lemma12}, we obtain a positive answer to Question \ref{Question1} for local domains $R$  and rank 1 valuations $\nu$ which admit local unformization.
A positive answer to Question \ref{Question1} for rank 1 valuations with the additional conclusion that $\widehat{R'}/p$ is a regular local ring is given in \cite{C2} and in  \cite[Theorem 7.2]{CG} for $R$ which are essentially of finite type over a field of characteristic zero.  This is generalized somewhat in \cite{CE} and \cite{Eh}.

Related to Question \ref{Question1} is the following question, which we will also answer.

\begin{Question}\label{Question2} Suppose that $R$ is a Noetherian local domain which is dominated by a valuation $\nu$. Does there exist a regular local ring $R'$ of the quotient field $K$ of $R$
 such that $\nu$ dominates $R'$ and $R'$ dominates $R$, and  an extension $\nu^h$ of $\nu$ to 
the quotient field of the Henselization $(R')^h$ of  $R'$ which dominates $(R')^h$ such that
$$
{\rm gr}_{\nu}(R')\cong {\rm gr}_{\nu^h}((R')^h))?
$$
\end{Question}

A positive answer to Question \ref{Question1} would imply a positive answer to Question \ref{Question2}.

We prove the following proposition  on the extension of associated graded rings under an unramified extension in \cite{C1}, which gives a start on answering Question \ref{Question2}.  Related problems are considered in \cite{GAST}.

\begin{Proposition}\label{PropUnR}(\cite[Proposition 1.7]{C1})  Suppose that $R$ and $S$ are  normal local rings such that $R$ is excellent,  $S$ lies over  $R$ and $S$ is unramified over $R$, $\tilde \nu$ is a valuation of the quotient field $L$ of $S$ which dominates $S$, and $\nu$ is the restriction of $\tilde \nu$ to the quotient field $K$ of $R$. Suppose that $L$ is finite  over $K$.  Then there exists a normal local ring $R'$ of $K$ which is dominated by $\nu$ and dominates $R$,
  such that if $R''$ is a normal local ring of $K$ which is dominated by $\nu$ and dominates $R'$
  and $S''$ is the normal local ring of $L$ which is dominated by $\tilde\nu$ and lies over $R''$, then $R''\rightarrow S''$ is unramified, and
$$
{\rm gr}_{\tilde\nu}(S'')\cong {\rm gr}_{\nu}(R'')\otimes_{R''/m_{R''}}S''/m_{S''}.
$$
\end{Proposition}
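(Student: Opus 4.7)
The plan is to find a normal local ring $R'$ of $K$, dominated by $\nu$ and dominating $R$, after which the invariants of the valuation extension $\tilde\nu/\nu$ are already realized by the ring extension $R'\to S'$. The strategy has three parts: stabilize the residue field extension using excellence of $R$; use the unramified hypothesis to force the value semigroups on $R''$ and $S''$ to coincide; then read off the graded ring isomorphism piece by piece.

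Since $L/K$ is finite, the residue extension $V_{\tilde\nu}/m_{\tilde\nu}$ over $V_{\nu}/m_{\nu}$ is finite. Using excellence of $R$ (so that integral closures inside $L$ remain finite and the unramified hypothesis persists under the modifications), I would construct $R'$ by a finite sequence of local blow-ups along $\nu$ so that $R'\to S'$ remains unramified and $S'/m_{S'}$ already contains generators of $V_{\tilde\nu}/m_{\tilde\nu}$ over $V_{\nu}/m_{\nu}$. The key consequence is that for any $R''$ between $R'$ and $V_{\nu}$, the residue fields $S''/m_{S''}$ and $V_{\nu}/m_{\nu}$ are linearly disjoint over $R''/m_{R''}$ inside $V_{\tilde\nu}/m_{\tilde\nu}$.

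For such $R''$, the ring $S''$ is a free $R''$-module of rank $d=[S''/m_{S''}:R''/m_{R''}]$, with basis $e_1,\ldots,e_d$ lifting an $R''/m_{R''}$-basis of $S''/m_{S''}$. Given $s=\sum r_i e_i\in S''$, letting $\gamma_0=\min_i\nu(r_i)$ and choosing $\pi\in V_{\nu}$ with $\nu(\pi)=\gamma_0$, the residue of $s/\pi$ in $V_{\tilde\nu}/m_{\tilde\nu}$ is $\sum \overline{r_i/\pi}\,\bar e_i$, which is nonzero by linear disjointness; hence $\tilde\nu(s)=\gamma_0\in\Phi_{\nu}$. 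This shows $S^{S''}(\tilde\nu)=S^{R''}(\nu)$, and identifies $\mathcal P_\gamma(S'')=\bigoplus_i \mathcal P_\gamma(R'')e_i$ and $\mathcal P^+_\gamma(S'')=\bigoplus_i \mathcal P^+_\gamma(R'')e_i$ for each $\gamma\in S^{R''}(\nu)$, so
$$
\mathcal P_\gamma(S'')/\mathcal P^+_\gamma(S'')\cong \bigl(\mathcal P_\gamma(R'')/\mathcal P^+_\gamma(R'')\bigr)\otimes_{R''/m_{R''}}S''/m_{S''},
$$
and summing over $\gamma$ yields the claim. The main obstacle is the stabilization step: arranging linear disjointness of $S''/m_{S''}$ and $V_{\nu}/m_{\nu}$ over $R''/m_{R''}$ for \emph{every} intermediate $R''$, not just $R''=R'$. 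This is where excellence of $R$ is essential, both to keep integral closures finite and to ensure that the unramified hypothesis is stable under the local modifications, and where the generators defining $R'$ must be chosen so that the ring residue extension already captures enough of the valuation residue extension.
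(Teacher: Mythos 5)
The paper does not prove this proposition; it is quoted from \cite[Proposition 1.7]{C1}, so your attempt has to be judged on its own terms. Your overall strategy (stabilize the residue field data by blowing up, use linear disjointness of $S''/m_{S''}$ and $V_\nu/m_\nu$ over $R''/m_{R''}$ to control values, then compute the graded pieces) is the right kind of plan, and the injectivity half of your argument --- dividing by $\pi$ and using linear disjointness to see that $\sum \overline{r_i/\pi}\,\bar e_i\neq 0$ --- is sound as far as it goes. But there is a concrete false step at the heart of the surjectivity half: $S''$ is \emph{not} a free $R''$-module of rank $d=[S''/m_{S''}:R''/m_{R''}]$. By definition $S''$ is the localization of the integral closure $\overline{R''}$ of $R''$ in $L$ at the center of $\tilde\nu$; it is not even finite over $R''$ unless $\overline{R''}$ happens to be local, and when $\overline{R''}$ is free its rank is $[L:K]$, not $d$. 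A minimal example: $R''=k[x]_{(x)}$, $L=K(\sqrt{1+x})$ with $\mathrm{char}\,k\neq 2$; here $d=1$ but $S''\neq R''$ since its fraction field is $L$. Without the decomposition $S''=\bigoplus R''e_i$ with the $\bar e_i$ linearly independent over $V_\nu/m_\nu$, both the formula $\tilde\nu\bigl(\sum r_ie_i\bigr)=\min_i\nu(r_i)$ and the identification $\mathcal P_\gamma(S'')=\bigoplus_i\mathcal P_\gamma(R'')e_i$ collapse: writing a general $s\in S''$ as $b/u$ with $b\in\overline{R''}$ and $u$ a unit of $S''$ only reduces to elements of $\overline{R''}$, whose residues in $S''/m_{S''}$ are \emph{not} linearly independent (only $d$ of the $[L:K]$ basis residues survive), so the minimum formula fails there (e.g.\ $\tilde\nu(e_1-e_2)>0$ when $e_1\equiv e_2$ mod $m_{S''}$). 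This is exactly where the real work lies, and it cannot be patched by passing to the completion or henselization, since extending $\nu$ to those rings is the very problem the surrounding paper is about.

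Separately, the stabilization step --- producing an $R'$ such that linear disjointness (and enough generation of $V_{\tilde\nu}/m_{\tilde\nu}$) holds for \emph{every} normal local $R''$ between $R'$ and $V_\nu$, with $R''\to S''$ remaining unramified --- is the substantive content of the proposition, and you acknowledge it only as ``the main obstacle'' without an argument. As the paper itself emphasizes (the example at the end of \cite[Section 5]{C1}), taking $R'\neq R$ really is necessary, because the residue field of the center can keep growing under blowing up for composite valuations; so one must prove that this growth terminates, which uses finiteness of $V_{\tilde\nu}/m_{\tilde\nu}$ over $V_\nu/m_\nu$ together with excellence, and is not automatic. As it stands, the proposal identifies the right invariants but proves neither the surjectivity of the comparison map nor the existence of the stabilizing $R'$.
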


We give an example at the end of 
  of \cite[Section 5]{C1}  showing  that it may be necessary  to take $R'\ne R$ to obtain the conclusions of Proposition \ref{PropUnR} if $\nu$ has rank greater than 1. The ring $R$ is regular and there is no residue field extension in the example. This example shows that it may be necessary to perform a proper extension $R\rightarrow R'$ take $R'\ne R$ to obtain a positive answer to  Question \ref{Question2} or  \ref{Question1}, even if $R$ is a regular local ring. This problem arises from the fact that the residue field under blowing up of the center of a composite valuation can increase. 
 Related examples are considered in \cite{GAST}. In  \cite[Remark 2]{GAST}, it is already observed  that the increase of residue field under blowing up of the center of a composite valuation is a critical issue  in understanding an extension of a valuation dominating a local domain to its completion.
 
 In this paper, we show  that Question \ref{Question2} and Question \ref{Question1} have a negative answer in general.   This is accomplished in Theorems \ref{Theorem1} and \ref{Theorem2} stated below, which are proven in Section \ref{SecCounter}. The examples of the theorems are on  three dimensional regular local rings  which are a localization at a maximal ideal of a polynomial ring  over an arbitrary algebraically closed field.

 \begin{Theorem}\label{Theorem1}  Suppose $k$ is an algebraically closed field. Then there exists a three dimensional regular local ring $T_0$, which is a localization of a finite type $k$-algebra, with residue field $k$, and a valuation $\phi$ of the quotient field $K$ of $T_0$ which dominates $T_0$ and whose residue field is $k$, such that if $T$ is a regular local ring of $K$ which is dominated by $\phi$ and dominates $T_0$, $T^h$ is the Henselization of $T$ and $\phi^h$ is an extension of $\phi$ to the quotient field of $T^h$ which dominates $T^h$, then $S^{T^h}(\phi^h)\ne S^T(\phi)$,
under the natural inclusion $S^T(\phi)\subset S^{T^h}(\phi^h)$.
\end{Theorem}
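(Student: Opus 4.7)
\medskip

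The plan is to construct an explicit composite rank-$2$ valuation $\phi = \nu_0 \circ \nu_1$ on $K = k(x,y,z)$ dominating $T_0 = k[x,y,z]_{(x,y,z)}$, together with a distinguished Henselian element $w \in T_0^h$ whose $\phi^h$-value is not realized as a $\phi$-value on any regular local ring $T$ with $T_0 \subseteq T \subseteq V_\phi$. The composite rank $2$ is essentially forced: applying Lemma \ref{Lemma12} to $T \subseteq T^h \subseteq \hat T/P(\hat T)_\infty$ yields $S^T(\phi) = S^{T^h}(\phi^h)$ whenever $\phi$ has rank $1$, so the failure must occur in the composite case, where the interaction of $\nu_0$'s residue field with $\nu_1$ creates room for Hensel-produced values to escape $S^T(\phi)$. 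This fits the warning in the introduction that residue-field growth under blowing up of the center of a composite valuation is the decisive subtlety.

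Concretely, I would take $\nu_0$ to be a rank-$1$ (divisorial or Abhyankar) valuation of $K$ with residue field $\kappa = V_{\nu_0}/m_{\nu_0}$ of transcendence degree $2$ over $k$, and $\nu_1$ a rank-$1$ valuation of $\kappa$ with residue field $k$ and value group $\Phi_{\nu_1} \subseteq \RR$ chosen so that the $\nu_1$-values accessible to images in $\kappa$ of elements of any such $T$ form a \emph{proper} sub-semigroup of $\Phi_{\nu_1}$. The witness $w \in T_0^h$ is a polynomial expression in a Hensel root $u$ of a carefully designed monic $F(T) \in T_0[T]$ with simple root in $k$ modulo $m_{T_0}$. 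The coefficients of $F$ are engineered so that $\nu_0^h(w) = 0$ and the image $\bar w \in V_{\nu_0^h}/m_{\nu_0^h}$ is a genuinely algebraic element over $\kappa$, whose $\nu_1^h$-value equals a specific $\gamma \in \Phi_{\nu_1}$ realized in $\kappa$ only by elements lying outside the rational subfields of $\kappa$ available to $T$-residues. Then $\phi^h(w) = (0, \gamma) \in S^{T_0^h}(\phi^h)$, giving a candidate new element.

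The central verification step is that $(0,\gamma) \notin S^T(\phi)$ for every admissible $T$. For any such $T$ and any nonzero $f \in T$, write $f = \pi^{\nu_0(f)} g$ with $\pi$ a $\nu_0$-uniformizer and $g$ a $\nu_0$-unit; then $\phi(f) = (\nu_0(f), \nu_1(\bar g))$, where $\bar g$ lies in the residue field $k(\mathfrak{p}_T)$ of the $\nu_0$-center $\mathfrak{p}_T = m_{\nu_0} \cap T$ inside $\kappa$. Because $k$ is algebraically closed and $T$ is essentially of finite type over $k$, arising from $T_0$ by local birational transforms dominated by $\phi$, the subfield $k(\mathfrak{p}_T) \subseteq \kappa$ is purely transcendental over $k$, and the $\nu_1$-values realized on $k(\mathfrak{p}_T)^\times$ form a proper sub-semigroup of $\Phi_{\nu_1}$ that, by the design of $\phi$ and $\gamma$, omits $\gamma$. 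The Henselian residue $\bar w$, being genuinely algebraic over $\kappa$, escapes every such rational subfield.

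The main obstacle is establishing the rationality of $k(\mathfrak{p}_T)$ uniformly across all admissible $T$, not only across those arising from an explicit iterated quadratic transform of $T_0$ along $\phi$. This needs a Zariski-style structural argument in dimension three showing that any such $T$ is dominated by some iterated blowup of $T_0$ along $\phi$, together with an inductive check that blowups preserve the rationality over $k$ of residue fields of $\nu_0$-centers. In parallel, the concrete choice of $\phi$, $F$ and $w$ must be coordinated so that the target $\gamma$ is genuinely unreachable through any rational residue yet forced by the algebraic relation $F(u) = 0$. The technical heart is thus the contrast between the rational residue extensions produced by blowups and the honest algebraic extensions introduced by Hensel's lemma, and this is what I expect to be the core calculation in Section~\ref{SecCounter}.
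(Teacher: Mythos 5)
Your overall framing (a rank $2$ composite valuation, new values created by a Hensel root, the reduction showing why rank $1$ cannot give a counterexample via Lemma \ref{Lemma12}) matches the paper, but the mechanism you propose for producing the new value is different from the paper's and, as designed, it cannot work. You place the new value at $(0,\gamma)$ with $\gamma\in\Phi_{\nu_1}$, i.e.\ in the convex subgroup coming from the residue field of the coarse valuation $\nu_0$, and you ask that a \emph{single} witness $w\in T_0^h$ satisfy $(0,\gamma)\notin S^T(\phi)$ for \emph{every} admissible $T$. Since $\gamma$ is, by your own hypothesis, realized by an element $\bar v$ of $\kappa=V_{\nu_0}/m_{\nu_0}$, and since $\kappa$ is the direct limit of the residue fields $k(\mathfrak{p}_T)$ of the $\nu_0$-centers ($V_\phi$ being the union of the local rings it dominates), a lift $v\in V_\phi$ of $\bar v$ lies in some admissible $T$, and then $\phi(v)=(0,\gamma)\in S^T(\phi)$. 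More generally, any prospective new value that already lies in $\Phi_\phi$ is eventually captured as $T$ grows, so the witness must depend on $T$ --- as it does in the paper, where for each $T$ one adjoins the ``next'' root $\lambda=(1+t)^{1/\overline p_{1+l}}$ determined by the largest $R_l$ that $T$ dominates.

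Second, your central structural claim --- that the residue fields $k(\mathfrak{p}_T)$ stay purely transcendental over $k$ while only Henselization introduces algebraic residue elements --- is exactly backwards relative to the phenomenon the paper is built on. For a composite valuation, blowing up the center genuinely enlarges the residue field of the center of the coarse valuation by algebraic extensions: in the paper's example the centers of $\nu$ on the rings $R_j$ have residue fields $k(t)(\alpha_1,\dots,\alpha_j)$ with $\alpha_i$ a $\overline p_i$-th root of $1+t$. The paper does not try to prevent this growth; it exploits it. For each $T$ there is a finite stage $l$ such that the root $\lambda$ of $u^{\overline p_{1+l}}-(1+t)$ has not yet been absorbed; adjoining it is unramified with trivial residue extension for $\phi$ (because $V_\phi/m_\phi=k$ is algebraically closed --- this is the essential role of the composite structure), hence the resulting ring $U$ embeds in $T^h$, and $\lambda$ splits the generating-sequence element $Q_{2,n}$ into conjugate factors $h_j$, one of which has a value mapping to an element of $\Phi_\nu$ that is \emph{not} in $S^A(\nu)$: the obstruction is a semigroup/index computation via $[G(\nu(Q_{0,n}),\ldots,\nu(Q_{i,n})):G(\nu(Q_{0,n}),\ldots,\nu(Q_{i-1,n}))]=\overline p_{i+l}$, not a rationality statement about residue fields. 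To salvage your plan you would have to let the witness vary with $T$ and replace the rationality argument by a quantitative control of which algebraic residue elements each $T$ has already acquired; at that point you would essentially be reconstructing the paper's generating-sequence computation of Sections \ref{SecGen} and \ref{SecCounter}.
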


\begin{Theorem}\label{Theorem2}  Suppose $k$ is an algebraically closed field. Then there exists a three dimensional regular local ring $T_0$, which is a localization of a finite type $k$-algebra, with residue field $k$, and a valuation $\phi$ of the quotient field $K$ of $T_0$ which dominates $T_0$ and whose residue field is $k$, such that if $T$ is a regular local ring of $K$ which is dominated by $\phi$ and dominates $T_0$
 and $\hat T$ is the $m_T$-adic completion of $T$, then there does not exist a prime ideal $p$ of $\hat T$ such that $p\cap T=(0)$ with an extension $\hat \phi$ to the quotient field of $\hat T/p$ which dominates $\hat T/p$ such that $S^{\hat T/p}(\hat \phi)=S^T(\phi)$, under the natural inclusion $S^T(\phi)\subset S^{\hat T/p}(\hat \phi)$.
\end{Theorem}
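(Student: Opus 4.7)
My plan is to deduce Theorem \ref{Theorem2} from Theorem \ref{Theorem1} by using the same $T_0$ and $\phi$, reducing any candidate $(T,p,\hat\phi)$ satisfying the negation of the Theorem \ref{Theorem2} conclusion to a candidate $(T,\phi^h)$ satisfying the negation of the Theorem \ref{Theorem1} conclusion. Concretely, I would suppose for contradiction that $T$ is a regular local ring of $K$ dominating $T_0$ and dominated by $\phi$, $p$ is a prime of $\hat T$ with $p\cap T=(0)$, and $\hat\phi$ is an extension of $\phi$ to ${\rm QF}(\hat T/p)$ dominating $\hat T/p$ with $S^{\hat T/p}(\hat\phi)=S^T(\phi)$. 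The Henselization $T^h$ sits inside $\hat T$, and I would consider the composite local homomorphism $T^h \hookrightarrow \hat T \to \hat T/p$.

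The central step is to verify that this composite is injective, i.e.\ that $p \cap T^h=(0)$. Let $f\in T^h\setminus(0)$. Since $T^h$ is a filtered colimit of essentially \'etale $T$-algebras, $f$ is algebraic over $K={\rm QF}(T)$; let $P(X)\in K[X]$ be its monic minimal polynomial. Irreducibility of $P$ forces its constant term $c_0$ to be nonzero, since otherwise $X\mid P$ would contradict irreducibility together with $f\ne 0$. Clearing a common denominator $d\in T\setminus(0)$ yields $Q(X)=a_nX^n+\cdots+a_1X+a_0\in T[X]$ with $Q(f)=0$ and $a_0=dc_0\ne 0$ because $T$ is a domain. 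If $f$ lay in $p$, then the identity $a_0=-(a_nf^n+\cdots+a_1f)$ would force $a_0\in p\cap T=(0)$, a contradiction. Hence $T^h$ embeds in $\hat T/p$.

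With the embedding in hand, I would set $\phi^h$ to be the restriction of $\hat\phi$ to ${\rm QF}(T^h)\subset {\rm QF}(\hat T/p)$; since $T^h\to \hat T/p$ is a local injection and $\hat\phi$ dominates $\hat T/p$, the valuation $\phi^h$ dominates $T^h$ and extends $\phi$. The inclusions $T\subset T^h\subset \hat T/p$ then give
\[
S^T(\phi)\subset S^{T^h}(\phi^h)\subset S^{\hat T/p}(\hat\phi)=S^T(\phi),
\]
forcing $S^{T^h}(\phi^h)=S^T(\phi)$, which contradicts Theorem \ref{Theorem1}. The only delicate point in this plan is the injectivity of $T^h\hookrightarrow \hat T/p$; once that is established the deduction is formal, and since Theorem \ref{Theorem1} holds for \emph{every} regular local ring $T$ between $T_0$ and $\phi$ and every extension $\phi^h$, the reduction rules out every candidate triple $(T,p,\hat\phi)$ uniformly.
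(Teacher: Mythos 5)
Your proposal is correct and follows essentially the same route as the paper: embed $T^h$ into $\hat T/p$, restrict $\hat\phi$ to get $\phi^h$, and contradict Theorem \ref{Theorem1} via $S^T(\phi)\subset S^{T^h}(\phi^h)\subset S^{\hat T/p}(\hat\phi)$. Your verification that $p\cap T^h=(0)$ via the nonzero constant term of the minimal polynomial is just a rephrasing of the paper's norm argument (the norm is, up to sign and a power, that constant term), so there is no substantive difference.
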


A  very interesting related problem, which is still open, is  \cite[Conjecture 1.1]{GAST}  on the existence of ``scalewise birational'' extensions of associated graded rings.   \cite[Conjecture 1.1]{GAST} is a refinement of  \cite[Statement 5.19]{T1}. The two results Theorem \ref{Theorem1} and Theorem \ref{Theorem2} of this paper are counterexamples to possible hopes of improving the statements of Conjecture 1.11 and Theorem 7.1 of \cite{GAST}.

This paper relies on the construction of generating sequences (Section \ref{SecGen}), using the algorithm of \cite{CV}, which is a generalization of the algorithm of \cite{S}. The construction of generating sequences in a local domain which is  dominated by a valuation which provide enough information to determine the associated graded ring along the valuation is an important problem. Some recent papers addressing this are \cite{Eh}, \cite{K}, \cite{Mo}  and \cite{NS}.
 
 \section{Notation}

The nonnegative integers will be denoted by $\NN$ and the positive integers will be denoted by $\ZZ_+$. If $\Lambda$ is a subset of an Abelian  group $G$ then $G(\Lambda)$ will denote the group generated by $\Lambda$ and $S(\Lambda)$ will denote the semigroup (containing zero) generated by $\Lambda$. 

The maximal ideal of a local ring $R$ will be denoted by $m_R$. Suppose that $K$ is a field. A local ring of $K$ is a local domain whose quotient field is $K$. We will say that a local domain $B$ dominates a local domain $A$ if $A\subset B$ and $m_B\cap A=m_A$. If a regular local ring $B$ dominates a local domain $A$ and $B$ is a local ring of the blow up of an ideal $I$ in $A$, then a strict transform of an element $f\in A$ in $B$ is a generator $g$ of the principal ideal $(f):_BIB$; that is, $g$ is a generator of the ideal of the strict transform of $\mbox{Spec}(A/(f))$ in $\mbox{Spec}(B)$.

If $\nu$ is a valuation of a field $K$, $V_{\nu}$ will denote the valuation ring of $\nu$, $m_{\nu}$ will denote the maximal ideal of $V_{\nu}$ and $\Phi_{\nu}$ will denote the value group of $\nu$.
The basics of valuation theory are explained in   \cite[Chapter VI]{ZS2} and   \cite[Chapter II]{RTM}. We will say that a valuation $\nu$ dominates a local domain $R$ if $V_{\nu}$ dominates $R$. We define the semigroup
$$
S^R(\nu)=\{\nu(f)\mid f\in R\setminus \{0\}\}.
$$

\section{Construction of generating sequences of a valuation}\label{SecGen}
Suppose that $R$ is a local domain and $\nu$ is a valuation dominating $R$. A sequence of elements $\{P_i\}$ in $R$ is called a generating sequence for $\nu$ if 
the classes ${\rm in}_{\nu}(P_i)$ of the $P_i$ in $\mathcal P_{\nu(P_i)}/\mathcal P_{\nu(P_i)}^+$ generate ${\rm gr}_{\nu}(R)$ as an $R/m_R$-algebra.

Let $k$ be an algebraically closed field, The regular local ring $R_0=k[x,y,t]_{(x,y)}\cong k(t)[x,y]_{(x,y)}$ has regular parameters $x,y$ and residue field $R_0/m_{R_0}\cong k(t)$. We will inductively define a generating sequence 
$$
P_0=x, P_1=y,P_2,\ldots
$$
in $R_0$,  defining a valuation $\nu$ dominating $R_0$, using the method of the proof of \cite[Theorem 1.1]{CV}. The generating sequence will satisfy the good conditions of the conclusions of \cite[Theorem 4.2]{CV}.  Let $\overline p_1,\overline p_2,\ldots$ be the sequence of prime numbers, excluding the characteristic of $k$. Define $a_1=\overline p_1+1$ and inductively define positive integers $a_i$ by
$$
a_{i+1}=\overline p_i\overline p_{i+1}a_i+1.
$$
Define 
$$
P_{i+1}=P_i^{\overline p_i^2}-(1+t)x^{\overline p_ia_i}
$$
for $i\ge 1$. Set $\nu(x)=1$ and 
$$
\nu(P_i)=\frac{a_i}{\overline p_i}\nu(x)\mbox{ for }i\ge 1.
$$
We have $\mbox{gcd}(a_i,\overline p_i)=1$ for all $i$.  Thus $G(\nu(P_0),\nu(P_1))=\frac{1}{\overline p_1}\ZZ$ and 
$$
\overline n_i:= [G(\nu(P_0),\ldots,\nu(P_i)):G(\nu(P_0),\ldots,\nu(P_{i-1})]=\overline p_i
$$
for $i\ge 1$. 

We have $\nu(P_i)>\overline p_{i-1}^2\nu(P_{i-1})$ for $i\ge 2$ and the value group
\begin{equation}\label{eq60}
\Phi_{\nu}=\cup_{i\ge 1}\frac{1}{\overline p_1\overline p_2\cdots\overline p_i}\ZZ.
\end{equation}
Let $\kappa$ be an algebraic closure of $k(t)$ and let $\alpha_i\in \kappa$ be a root of $f_i(u)=u^{\overline p_i}-(1+t)\in k[u]$ for $i\ge 1$. Since the $\overline p_i$ are all pairwise relatively prime, by induction on $i$, we see that $f_i(u)$ is the minimal polynomial of $\alpha_i$ over $k(\alpha_1,\ldots,\alpha_{i-1})$ and 
$$
d_i:= \overline p_i=[k(t)(\alpha_1,\ldots,\alpha_i):k(t)(\alpha_1,\ldots,\alpha_{i-1})]
$$
 for $i\ge 1$.   Since $n_i=d_i\overline n_i=\overline p_i^2$ for $i\ge 1$, by the algorithm of the proof of  \cite[Theorem 1.1]{CV}, the $P_i$ are the generating sequence of a valuation $\nu$ dominating $R_0$ which has the property that setting $U_i=x^{a_i}$ for $i\ge 1$, we have 
 $V_{\nu}/m_{\nu}=k(\{\alpha_i\mid i\ge 1\})$ with 
 $$
 \alpha_i=\left[\frac{P_i^{\overline p_i}}{U_i}\right]\in V_{\nu}/m_{\nu}
 $$
 for $i\ge 1$.

From $\overline p_1\nu(y)=a_1\nu(x)$ with $\mbox{gcd}(a_1,\overline p_1)=1$, there exist $\overline a_0,\overline b_0\in \ZZ_+$ such that $\overline p_1\overline b_0-a_1\overline a_0=1$. Set
$$
x=x_1^{\overline p_1}\tilde y_1^{\overline a_0},\,\, y=x_1^{a_1}\tilde y_1^{\overline b_0}.
$$
Then $\nu(x_1)>0$ and $\nu(\tilde y_1)=0$. Set
$$
R_1=R_0[x_1,\tilde y_1]_{m_{\nu}\cap R_0[x_1,\tilde y_1]}.
$$
By  \cite[Theorem 7.1]{CV},
$$
x_1, y_1=\frac{P_2}{x_1^{a_1\overline p_1^2}}
$$
are regular parameters in $R_1$ and
$$
P_{0,1}=x_1, P_{1,1}=\frac{P_2}{P_{0,1}^{a_1\overline p_1^2}},
$$
$$
P_{i,1}=\frac{P_{i+1}}{P_{0,1}^{a_1\overline p_1^2\cdots \overline p_i^2}}
$$
for $i\ge 1$ is a generating sequence for $\nu$ in $R_1$. For $i\ge 1$, we have
$$
P_{i+1,1}=\frac{P_{i+2}}{P_{0,1}^{a_1\overline p_1^2\cdots \overline p_{i+1}^2}}
=P_{i,1}^{\overline p_{i+1}^2}-(1+t)\tilde y_1^{\overline a_0a_{i+1}\overline p_{i+1}}x_1^{\overline p_{i+1}(\overline p_1a_{i+1}-a_1\overline p_1^2\cdots \overline p_{i}^2\overline p_{i+1})}.
$$
Set 
$$
a_{i,1}=\overline p_1a_{i+1}-a_1\overline p_1^2\cdots\overline p_i^2 \overline p_{i+1}
$$
and 
$$
\tau_{i,1}=\tilde y_1^{\overline a_0a_{i+1}},
$$
a unit in $R_1$ for $i\ge 1$. Then we have expressions
\begin{equation}\label{eq21}
P_{i+1,1}=P_{i,1}^{\overline p_{i+1}^2}-(1+t)\tau_{i,1}^{\overline p_{i+1}}x_1^{\overline p_{i+1}a_{i,1}}
\end{equation}
for $i\ge 1$. We have 
$$
\nu(x_1)=\frac{1}{\overline p_1}\nu(x)=\frac{1}{\overline p_1}\mbox{ and }\nu(P_{i,1})=\frac{a_{i,1}}{\overline p_{i+1}}\nu(x_1)
$$
with $\mbox{gcd}(a_{i,1},\overline p_{i+1})=1$ for all $i\ge 1$.

We have that 
$$
R_1/m_{R_1}=R_0/m_{R_0}\left[\frac{P_1^{\overline p_2}}{P_0^{a_1}}\right]=R_0/m_{R_0}[\alpha_1]=k(t)(\alpha_1),
$$
$$
\overline n_{i,1}:=[G(\nu(P_{0,1}),\ldots, \nu(P_{i,1})):G(\nu(P_{0,1}),\ldots,\nu(P_{i-1,1}))]=\overline n_{i+1}=\overline p_{i+1}
$$
for $i\ge 1$ and setting $U_{i,1}=x_1^{a_{i,1}}$ for $i\ge 1$ and
$$
\alpha_{i,1}=\left[\frac{P_{i,1}^{\overline p_{i+1}}}{U_{i,1}}\right]\in V_{\nu}/m_{\nu}.
$$
Let $\overline \tau_{i,1}\in R_1/m_{R_1}=k(t)(\alpha_1)$ be the class of $\tau_{i,1}$ in $V_{\nu}/m_{\nu}$.
We have that
$$
f_{i,1}(u)=u^{\overline p_{i+1}}-(1+t)\overline \tau_{i,1}^{\overline p_{i+1}}
$$
is the minimal polynomial of $\alpha_{i,1}$ over
$$
R_1/m_{R_1})(\alpha_{1,1},\ldots,\alpha_{i-1,1})=k(t)(\alpha_1,\ldots,\alpha_i).
$$
Thus
$$
d_{i,1}:= [R_1/m_{R_1}(\alpha_{1,1},\ldots,\alpha_{i,1}):R_1/m_{R_1}(\alpha_{1,1},\ldots,\alpha_{i-1,1})]=\overline p_{i+1}
$$
and
$$
n_{i,1}:= d_{i,1}\overline n_{i,1}=\overline p_{i+1}^2
$$
for $i\ge 1$.

Iterating this construction, we have an infinite sequence of birational extensions of regular local rings 
$$
R_0\rightarrow R_1\rightarrow \cdots\rightarrow R_j\rightarrow \cdots
$$
which are dominated by $\nu$ where $R_j$ has regular parameters $x_j,y_j$ and a generating sequence $\{P_{i,j}\}$ for $\nu$ defined by 
$$
P_{0,j}=x_j, P_{1,j}=y_j
$$
and for $i\ge 1$,
$$
P_{i+1,j}=P_{i,j}^{\overline p_{i+j}^2}-(1+t)\tau_{i,j}^{\overline p_{i+j}}x_j^{\overline p_{i+j}a_{i,j}}
$$
where $\tau_{i,j}$ are units in $R_j$ and $\mbox{gcd}(a_{i,j},\overline p_{i+j})=1$.  We have
$$
\nu(x_j)=\frac{1}{\overline p_1\cdots\overline p_j}\mbox{ and }\nu(P_{i,j})=\frac{a_{i,j}}{\overline p_{i+j}}\nu(x_j)
$$
for $i\ge 1$.

We have that 
$$
R_j/m_{R_j}=R_{j-1}/m_{R_{j-1}}\left[\frac{P_{1,j-1}^{\overline p_{1+j}}}{P_{0,j-1}^{a_{1,j-1}}}\right]=R_{j-1}/m_{R_{j-1}}[\alpha_{1,j-1}]=k(t)(\alpha_1,\ldots,\alpha_j),
$$
$$
\overline n_{i,j}:=[G(\nu(P_{0,j}),\ldots, \nu(P_{i,j})):G(\nu(P_{0,j}),\ldots,\nu(P_{i-1,j}))]=\overline n_{i+j}=\overline p_{i+j}
$$
for $i\ge 1$ and setting $U_{i,j}=x_j^{a_{i,j}}$ for $i\ge 1$ and
$$
\alpha_{i,j}=\left[\frac{P_{i,j}^{\overline p_{i+j}}}{U_{i,j}}\right]\in V_{\nu}/m_{\nu}.
$$
Let  $\overline \tau_{i,j}\in R_j/m_{R_j}=k(t)(\alpha_1,\ldots,\alpha_j)$ be the class of $\tau_{i,j}$ in $V_{\nu}/m_{\nu}$.
We have 
$$
f_{i,j}(u)=u^{\overline p_{i+j}}-(1+t)\overline \tau_{i,j}^{\overline p_{i+j}}
$$
 is the minimal polynomial of $\alpha_{i,j}$ over
$$
R_j/m_{R_j}(\alpha_{1,j},\ldots,\alpha_{i-1,j})=k(t)(\alpha_1,\ldots,\alpha_{i-1+j}).
$$
Thus
$$
d_{i,j}:= [R/m_{R_j}(\alpha_{1,j},\ldots,\alpha_{i,j}):R_j/m_{R_j}(\alpha_{1,j},\ldots,\alpha_{i-1,j})]=\overline p_{i+j}
$$
and
$$
n_{i,j}:= d_{i,j}\overline n_{i,j}=\overline p_{i+j}^2
$$
for $i\ge 1$.

Suppose $A$ is a regular local ring of the quotient field $K$ of $R_0$ which is dominated by $\nu$ and  dominates $R_0$. Then there exists a largest $l$ such that $A$ dominates $R_l$, so  there exist regular parameters $z$ and $w$ in $A$ such that $x_l$ and $y_l$ are monomials in $z$ and $w$. We thus have a factorization  (by  \cite[Theorem 3]{Ab2}) 
\begin{equation}\label{eq11}
R_l=D_0\rightarrow D_1\rightarrow \cdots\rightarrow D_n=A
\end{equation}
where $D_0$ has regular parameters $z=x_l,w=y_l$ and $D_{i+1}$ has regular parameters $z_{i+1}$, $w_{i+1}$, such that either $z_i=z_{i+1}$, $w_i=z_{i+1}w_{i+1}$, or
$z_i=z_{i+1}w_{i+1}$, $w_i=w_{i+1}$. 

We have shown that $D_0$ has a generating sequence $\{Q_i\}$ with $Q_i=P_{i.l}$ for $i\ge 0$, so that
$$
Q_0=z_0,\,\,Q_1=w_0,
$$
and 
$$
Q_{i+1}=Q_i^{\overline p_{i+l}^2}-(1+t)\tau_{i,l}^{\overline p_{i+l}}z_0^{\overline p_{i+l}e_{i,0}}
$$
where $e_{i,0}=a_{i,l}$ for $i\ge 1$.
We have $D_0/m_{D_0}=k(t)(\alpha_1,\ldots,\alpha_l)$.
Set
$$
\gamma_i=\alpha_{i,l}=\left[\frac{Q_i^{\overline p_{i+l}}}{Q_0^{e_{i,1}}}\right]
$$
for $i\ge 1$. We have that
\begin{equation}\label{eq40}
f_{i,l}(u)=u^{\overline p_{i+l}}-(1+t)\overline \tau_{i,l}^{\overline p_{i+l}}
\end{equation}
is the minimal polynomial of $\gamma_i$ over 
$$
D_0/m_{D_0}(\gamma_1,\ldots,\gamma_{i-1})=k(t)(\alpha_1,\ldots, \alpha_{i-1+l})
$$
so
$$
[D_0/m_{D_0}(\gamma_1,\ldots,\gamma_i):D_0/m_{D_0}(\gamma_1,\ldots,\gamma_{i-1})]=\overline p_{i+l}.
$$

We will show by induction on $j$ that $D_j$ has a generating sequence $\{Q_{i,j}\}$ defined by
\begin{equation}\label{eq12}
Q_{0,j}=z_j,\,\,Q_{1j}=w_j,
\end{equation}
\begin{equation}\label{eq13}
Q_{2,j}=Q_{1,j}^{\overline p_{1+l}c_j}-(1+t)\tau_{1,l}^{\overline p_{1+l}}Q_{0,j}^{\overline p_{1+l}e_{1,j}}
\end{equation}
with $c_j=[G(\nu(Q_{0,j},\nu(Q_{1,j})):G(\nu(Q_{1,j}))]$ and $\nu(Q_{1,j}^{c_j})=\nu(Q_{0,j}^{e_{1,j}})$ with $\mbox{gcd}(c_j,e_{1,j})=1$ and for $i\ge 2$,
\begin{equation}\label{eq14}
Q_{i+1,j}=Q_{i,j}^{\overline p_{i+l}^2}-(1+t)\tau_{i,l}^{\overline p_{i+l}}
Q_{0,j}^{\overline p_{i+l}e_{i,j}}Q_{1,j}^{\overline p_{i+l}f_{i,j}}
\end{equation}
with $\nu(Q_{i,j}^{\overline p_{i+l}})=\nu(Q_{0,j}^{e_{i,j}}Q_{1,j}^{f_{i,j}})$, and with
$$
\overline p_{i+j}=[G(\nu(Q_{0,j}),\ldots,\nu(Q_{i,j})):G(\nu(Q_{0,j}),\ldots,\nu(Q_{i-1,j}))].
$$
Further,
$$
\nu(Q_{2,j})>\overline p_{1+l}c_j\nu(Q_{1,j})=\overline p_{1+l}e_{1,j}\nu(Q_{0,j})
$$
and
$$
\nu(Q_{s,j})>\overline p_{s+l-1}^2\nu(Q_{s-1,j})
$$
for $s>2$.

We also have that $D_j/m_{D_j}=k(t)(\alpha_1,\ldots,\alpha_l)$ and
\begin{equation}\label{eq30}
\gamma_1=\left[\frac{Q_{1,j}^{c_j}}{Q_{0,j}^{e_{1,j}}}\right]\mbox{ and }
\gamma_i=\left[\frac{Q_{i,j}^{\overline p_{i+l}}}{Q_{0,j}^{e_{i,j}}Q_{1,j}^{f_{i,j}}}\right]\in V_{\nu}/m_{\nu}
\end{equation}
for $i\ge 2$.

We inductively construct the generating sequence $Q_{0,j},Q_{1,j},\ldots$ as follows. Suppose that $Q_{0,j},Q_{1,j},\ldots$ has been constructed. We will construct $Q_{0,j+1},Q_{1,j+1},\ldots$. We either have that 
\begin{equation}\label{eq1}
z_j=z_{j+1}w_{j+1}, w_j=w_{j+1}
\end{equation}
or
\begin{equation}\label{eq2}
z_j=z_{j+1}, w_j=z_{j+1}w_{j+1}.
\end{equation}

Suppose  (\ref{eq1}) holds, so that $\nu(w_j)<\nu(z_j)$. Then
\begin{equation}\label{eq7}
w_{j+1}=Q_{1,j+1}=Q_{1,j}\mbox{ and }z_{j+1}=Q_{0,j+1}=\frac{Q_{0,j}}{Q_{1,j+1}}.
\end{equation}
Substitute for $Q_{0,j}$, $Q_{1,j}$ in 
$$
Q_{2,j}=Q_{1,j}^{\overline p_{1+l}c_j}-(1+t)\tau_{1,l}^{\overline p_{1+l}}Q_{0,j}^{e_{1,j}\overline p_{1+l}},
$$
 to obtain
$$
Q_{2,j}=Q_{1,j+1}^{\overline p_{1+l}c_j}-(1+t)\tau_{1,l}^{\overline p_{1+l}}Q_{0,j+1}^{e_{1,j}\overline p_{1+l}}Q_{1,j+1}^{\overline p_{1+l}e_{1,j}}.
$$
Since $\nu(Q_{0,j})>\nu(Q_{1,j})$, and $\overline p_{1+l}c_j\nu(Q_{1,j})=\overline p_{1+l}e_{1,j}\nu(Q_{0,j})$, we have $\overline p_{1+l}e_{1,j}<\overline p_{1+l}c_j$. Thus a strict transform of $Q_{2,j}$ in $D_{j+1}$ is
\begin{equation}\label{eq8}
Q_{2,j+1}=\frac{Q_{2,j}}{Q_{1,j+1}^{\overline p_{1+l}e_{1,j}}}
=Q_{1,j+1}^{\overline p_{1+l}(c_j-e_{1,j})}-(1+t)\tau_{1,l}^{\overline p_{1+l}}Q_{0,j+1}^{\overline p_{1+l}e_{1,j}}.
\end{equation}

Suppose we have constructed the generating sequence out to 
$$
Q_{i,j+1}=\frac{Q_{i,j}}{Q_{1,j+1}^{e_{1,j}\overline p_{1+l}\overline p_{l+2}^2\cdots \overline p_{l+i-1}^2}}.
$$
Substituting into 
$$Q_{i+1,j}=Q_{i,j}^{\overline p_{i+l}^2}-(1+t)\tau_{i,l}^{\overline p_{i+l}}Q_{0,j}^{\overline p_{i+l}e_{i,j}}Q_{1,j}^{\overline p_{i+l}f_{i,j}},
$$
we have
$$
Q_{i+1,j}=Q_{1,j+1}^{e_{1,j}\overline p_{1+l}\overline p_{l+2}^2\cdots \overline p_{l+i}^2}Q_{i,j+1}^{\overline p_{i+l}^2}
-(1+t)\tau_{i,l}^{\overline p_{i+l}}Q_{0,j+1}^{\overline p_{i+l}e_{i,j}}Q_{1,j+1}^{\overline p_{i+l}(e_{i,j}+f_{i,j})}.
$$
We have that
$$
\overline p_{i+l}^2\nu(Q_{i,j})=\overline p_{i+l}e_{i,j}\nu(Q_{0,j})+\overline p_{i+l}f_{i,j}\nu(Q_{1,j})<\overline p_{i+l}(e_{i,j}+f_{i,j})\nu(Q_{0,j})
$$
since $\nu(Q_{1,j})<\nu(Q_{0,j})$, and from the inequalities
$$
\nu(Q_{s,j})>\overline p_{s+l-1}^2\nu(Q_{s-1,j})
$$
for $s>2$ and
$$
\nu(Q_{2,j})>\overline p_{1+l}e_{1,j}\nu(Q_{0,j})
$$
we have
$$
e_{1,j}\overline p_{1+l}\overline p_{2+l}^2\cdots \overline p_{i+l}^2<\overline p_{i+l}(e_{i,j}+f_{i,j})
$$
and thus
\begin{equation}\label{eq9}
Q_{i+1,j+1}=\frac{Q_{i+1,j}}{Q_{1,j+1}^{e_{1,j}\overline p_{1+l}\overline p_{2+l}^2\cdots \overline p_{i+l}^2}}
=Q_{i,j+1}^{\overline p_{i+l}^2}-(1+t)\tau_{i,l}^{\overline p_{i+l}}Q_{0,j+1}^{\overline p_{i+l}e_{i,j}}
Q_{1,j+1}^{\overline p_{i+l}(e_{i,j}+f_{i,j}-e_{1,j}\overline p_{1+l}\overline p_{2+l}^2\cdots \overline p_{i-1+l}^2\overline p_{i+l})}
\end{equation}
is a strict transform of $Q_{i+1,j}$ in $D_{j+1}$. 

Now suppose  (\ref{eq2}) holds, so that $\nu(w_j)>\nu(z_j)$. Then
\begin{equation}\label{eq6}
z_{j+1}=Q_{0,j+1}=Q_{0,j}\mbox{ and }w_{j+1}=Q_{1,j+1}=\frac{Q_{1,j}}{Q_{0,j+1}}.
\end{equation}
Substitute for $Q_{0,j}$ and $Q_{1,j}$ in
$$
Q_{2,j}=Q_{1,j}^{\overline p_{1,l}c_j}-(1+t)\tau_{1,l}^{\overline p_{1+l}}Q_{0,j}^{\overline p_{1+l}e_{1,j}}
$$
 to obtain
$$
Q_{2,j}=Q_{0,j+1}^{\overline p_{1+l}c_j}Q_{1,j+1}^{\overline p_{1+l}c_j}-(1+t)\tau_{1,l}^{\overline p_{1+l}}Q_{0,j+1}^{\overline p_{1+l}e_{1,j}}.
$$
Since $\nu(Q_{1,j})>\nu(Q_{0,j})$ and $\overline p_{1+l}c_j\nu(Q_{1,j})=\overline p_{1+l}e_{1,j}\nu(Q_{0,j})<\overline p_{1+l}e_{1,j}\nu(Q_{1,j})$
we have $\overline p_{1+l}c_j<\overline p_{1+l}e_{1,j}$. Thus a strict transform of $Q_{2,j}$ in $D_{j+1}$ is
\begin{equation}\label{eq3}
Q_{2,j+1}=\frac{Q_{2,j}}{Q_{0,j+1}^{c_j\overline p_{1+l}}}
=Q_{1,j+1}^{\overline p_{1+l}c_j}-(1+t)\tau_{1,l}^{\overline p_{1+l}}Q_{0,j+1}^{\overline p_{1+l}(e_{1,j}-c_j)}.
\end{equation}

Now suppose we have constructed the generating sequence out to 
$$
Q_{i,j+1}=\frac{Q_{i,j}}{Q_{0,j+1}^{c_j\overline p_{1+l}\overline p_{2+l}^2\cdots\overline p_{i-1+l}^2}}.
$$
Then substituting into 
$$
Q_{i+1,j}=Q_{i,j}^{\overline p_{i+l}^2}-(1+t)\tau_{i,l}^{\overline p_{i+l}}Q_{0,j}^{\overline p_{i+l}e_{i,j}}Q_{1,j}^{\overline p_{i+l}f_{i,j}},
$$
we have that
$$
Q_{i+1,j}=Q_{i,j+1}^{\overline p_{i+l}^2}Q_{0,j+1}^{c_j\overline p_{1+l}\overline p_{2+l}^2\cdots\overline p_{i+l}^2}
-(1+t)\tau_{i,l}^{\overline p_{i+l}}Q_{0,j+1}^{\overline p_{i+l}(e_{i,j}+f_{i,j})}Q_{1,j+1}^{\overline p_{i+l}f_{i,j}}.
$$
We have
$$
\overline p_{i+l}^2\nu(Q_{i,j})=\overline p_{i+l}e_{i,j}\nu(Q_{0,j})+\overline p_{i+l}f_{i,j}\nu(Q_{1,j})<\overline p_{i+l}(e_{i,j}+f_{i,j})\nu(Q_{1,j})
$$
since $\nu(Q_{0,j})<\nu(Q_{1,j})$, and from the inequalities
$$
\nu(Q_{s,j})>\overline p_{s-1+l}^2\nu(Q_{s-1,j})
$$
for $s>2$ and
$$
\nu(Q_{2,j})>\overline p_{1+l}c_j\nu(Q_{1,j}),
$$
we have 
$$
\overline p_{i+l}^2\overline p_{i-1+l}^2\cdots \overline p_{2+l}^2\overline p_{1+l}c_j<\overline p_{i+l}(e_{i,j}+f_{i,j})
$$
and thus
\begin{equation}\label{eq5}
Q_{i+1,j+1}=\frac{Q_{i+1,j}}{Q_{0,j+1}^{c_j\overline  p_{1+l}\overline p_{2+l}^2\cdots\overline p_{i+l}^2}}
=Q_{i,j+1}^{\overline p_{i+l}^2}-(1+t)\tau_{i,l}^{\overline p_{i+l}}Q_{0,j+1}^{\overline p_{i+l}(e_{i,j}+f_{i,j}-c_j\overline p_{1+l}\overline p_{2+l}^2\cdots \overline p_{i-1+l}^2\overline p_{i+l})}
Q_{1,j+1}^{\overline p_{i+l}f_{i,j}}
\end{equation}
is a strict transform of $Q_{i+1,j}$ in $D_{j+1}$.

Since $\mbox{gcd}(c_j,e_{1,j})=1$,
\begin{equation}\label{eq41}
[G(\nu(Q_{0,j}),\nu(Q_{1,j})):G(\nu(Q_{0,j}))]=c_j
\end{equation}
and since $G(\nu(Q_{0,j+1}),\nu(Q_{1,j+1}),\ldots,\nu(Q_{s,j+1}))=G(\nu(Q_{0,j}),\nu(Q_{1,j}),\ldots,\nu(Q_{s,j}))$ for $s\ge 1$, we have
\begin{equation}\label{eq42}
[G(\nu(Q_{0,j}),\nu(Q_{1,j}),\ldots,\nu(Q_{i,j})):G(\nu(Q_{0,j}),\nu(Q_{1,j}),\ldots,\nu(Q_{i-1,j}))]=\overline p_{i+l}
\end{equation}
for $i\ge 2$. Dividing the relation (\ref{eq13}) by $Q_{0,j}^{\overline p_{1+l}e_{1,j}}$ and taking the residue in $V_{\nu}/m_{\nu}$, we obtain
\begin{equation}\label{eq43}
0=\left[\frac{Q_{2,j}}{Q_{0,j}^{\overline p_{1+l}e_{1,j}}}\right]=f_{1,l}(\gamma_1)
\end{equation}
where $f_{1,l}$, defined by (\ref{eq40}), is the minimal polynomial of $\gamma_1$ over $D_j/m_{D_j}=k(t)(\alpha_1,\ldots,\alpha_l)$.
Dividing the relation (\ref{eq14}) by $(Q_{0,j}^{e_{i,j}}Q_{1,j}^{f_{i,j}})^{\overline p_{i+l}}$, and taking the residue in $V_{\nu}/m_{\nu}$, we obtain
\begin{equation}\label{eq44}
0=\left[\frac{Q_{i+1,j}}{(Q_{0,j}^{e_{i,j}}Q_{1,j}^{f_{i,j}})^{\overline p_{i+l}}}\right]=f_{i,l}(\gamma_i),
\end{equation}
where $f_{i,l}$, defined by (\ref{eq40}), is the minimal polynomial of $\gamma_i$ over $D_j/m_{D_j}(\gamma_1,\ldots,\gamma_{i-1})$ for $i\ge 2$.

We now verify (\ref{eq30}) by induction on $j$. If we are in case (\ref{eq1}), then the formula follows  for $j+1$ from induction and (\ref{eq7}), (\ref{eq8}) and (\ref{eq9}).
If we are in case (\ref{eq2}), then the formula follows for $j+1$ from induction  and (\ref{eq6}), (\ref{eq3}) and (\ref{eq5}).

The formula $D(i)$ of  \cite[Theorem 4.2]{CV} holds for the $Q_{0,j},Q_{1,j},\ldots$. That is, if we have a natural number $i$, a positive integer $m$ and natural numbers $f_n(s)$ for $1\le n\le i$ such that
$0\le f_1(s)<\overline p_{1+l}c_j$ for $1\le s\le m$ and $0\le f_n(s)<\overline p_{n+l}^2$ for $2\le s\le m$ and $1\le n\le i$. If
$$
\nu(Q_{0,j}^{f_0(s)}Q_{1,j}^{f_1(s)}\cdots Q_{i,j}^{f_i(s)})
= \nu(Q_{0,j}^{f_0(1)}Q_{1,j}^{f_1(1)}\cdots Q_{i,j}^{f_i(1)})
$$
for $1\le s\le m$, then
$$
1,\left[\frac{Q_{0.j}^{f_0(2)}Q_{1,j}^{f_1(2)}\cdots Q_{i,j}^{f_i(2)}}
                 {Q_{0.j}^{f_0(1)}Q_{1,j}^{f_1(1)}\cdots Q_{i,j}^{f_i(1)}}\right],\cdots,
                 \left[\frac{Q_{0.j}^{f_0(m)}Q_{1,j}^{f_1(m)}\cdots Q_{i,j}^{f_i(m)}}
                 {Q_{0.j}^{f_0(1)}Q_{1,j}^{f_1(1)}\cdots Q_{i,j}^{f_i(1)}}\right]
                 $$
are linearly independent over $D_j/m_{D_j}=k(t)(\alpha_1,\ldots,\alpha_l)$.
This formula follows from induction on $i$ and (\ref{eq30}), (\ref{eq40}), (\ref{eq41}) and (\ref{eq42})  as in the proof of $D(i)$ of  \cite[Theorem 4.2]{CV}.
Since $\nu$ has rank 1 by (\ref{eq60}),
 the fact that $Q_{0,j},Q_{1,j},\ldots$ is a generating sequence in $D_j$ is verified as in the proof of  \cite[Theorem 4.10 and Lemma 4.9]{CV}.

\section{Construction of unramified extensions which have larger valuation semigroups}

Let notation be as in the previous section. Let $K=k(t,x,y)$. Let $A$ be a regular local ring of $K$  which dominates $R_0$ and is dominated by $\nu$. Then there exists a factorization of $$
R_0\rightarrow R_l=D_0\rightarrow D_n=A
$$
 of the form  (\ref{eq11}). Let
\begin{equation}\label{eq51}
\mbox{$\lambda$ be a  $\overline p_{1+l}$-th root of $1+t$}
\end{equation}
in an algebraic closure of $K$,
$L=K(\lambda)$ and $\overline\nu$ be an extension of $\nu$ to $L$. Let $\omega\in k$ be a primitive $\overline p_{1+l}$-th root of unity. 

Let $\overline p=\overline p_{1+l}$ and $f(u)=u^{\overline p}-(1+t)$, the minimal polynomial of $\lambda$ over $K$. Let $B=A[\lambda]$. The ring $B$ is finite over $A$. We have the formula for the discriminant
$$
D_{L/K}(1,\lambda,\ldots,\lambda^{\overline p-1})=(-1)^{\frac{\overline p(\overline p-1)}{2}}\prod_{i=1}^{\overline p}\frac{df}{du}(\omega^i\lambda)
$$
by  \cite[Proposition 8.5 on page 204]{L} and  \cite[Formula (4) on page 204]{L}. Thus 
$$
D_{L/K}(1,\lambda,\ldots,\lambda^{\overline p-1})=(-1)^{\frac{\overline p(\overline p-1)}{2}}\overline p^{\overline p}(1+t)^{\overline p-1}.
$$
Now $1+t\in R_0$ and $1+t\not\in m_{R_0}$, so $1+t\not\in m_A$. Thus the discriminant ideal $D(B/A)$ in $A$ is equal to $A$, and so $B$ is normal and $A\rightarrow B$ is unramified by  \cite[Proposition 1.43]{RTM} and \cite[Theorem 1.44]{RTM}. Let $C=B_{m_{\overline\nu}\cap B}$. Then $A\rightarrow C$ is unramified, and so $C$ is a regular local ring, since the maximal ideal of $C$ is generated by a regular system of parameters in $A$.

\begin{Proposition}\label{Prop1} With the above notation, 
we have inequalities of semigroups
$$
S^C(\overline\nu)\ne S^A(\nu),
$$
under the natural inclusion $S^A(\nu)\subset S^C(\overline\nu)$.
\end{Proposition}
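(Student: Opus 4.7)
The plan is to exhibit an explicit element of $C$ whose $\overline\nu$-value lies in $S^C(\overline\nu)\setminus S^A(\nu)$. Writing $A=D_n$ with its generating sequence $\{Q_{i,n}\}$ from the previous section and setting $\overline p:=\overline p_{1+l}$, the key observation is that the defining relation
$$Q_{2,n}=Q_{1,n}^{\overline p c_n}-(1+t)\tau_{1,l}^{\overline p}Q_{0,n}^{\overline p e_{1,n}}$$
combined with the identity $1+t=\lambda^{\overline p}$ in $L$ expresses $Q_{2,n}$ as a difference of $\overline p$-th powers, so it factors in $B=A[\lambda]$ as
$$Q_{2,n}=\prod_{i=0}^{\overline p-1}H_i,\qquad H_i:=Q_{1,n}^{c_n}-\omega^i\lambda\tau_{1,l}Q_{0,n}^{e_{1,n}}\in B\subset C.$$

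Next I would compute the $\overline\nu$-values of the factors. In $V_{\overline\nu}/m_{\overline\nu}$, both the class $\gamma_1=[Q_{1,n}^{c_n}/Q_{0,n}^{e_{1,n}}]$ and $\overline\lambda\,\overline\tau_{1,l}$ are $\overline p$-th roots of $(1+t)\overline\tau_{1,l}^{\overline p}$, so $\gamma_1=\omega^{i_0}\overline\lambda\,\overline\tau_{1,l}$ for a unique index $i_0$. For $i\ne i_0$, the class of $H_i/Q_{0,n}^{e_{1,n}}$ is the nonzero element $(\omega^{i_0}-\omega^i)\overline\lambda\,\overline\tau_{1,l}$, so $\overline\nu(H_i)=e_{1,n}\nu(Q_{0,n})$. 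The product formula $\sum_i\overline\nu(H_i)=\nu(Q_{2,n})$ then yields
$$\overline\nu(H_{i_0})=\nu(Q_{2,n})-(\overline p-1)e_{1,n}\nu(Q_{0,n}),$$
which is positive (since $\nu(Q_{2,n})>\overline p\,e_{1,n}\nu(Q_{0,n})$) and strictly smaller than $\nu(Q_{2,n})$.

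Finally I would show $\overline\nu(H_{i_0})\notin S^A(\nu)$. Since $\{Q_{i,n}\}$ is a generating sequence for $\nu$ in $A$, every element of $S^A(\nu)$ has the form $\sum_{i\ge 0}a_i\nu(Q_{i,n})$ with $a_i\in\NN$. Suppose $\overline\nu(H_{i_0})=\sum_i a_i\nu(Q_{i,n})$. The bound $\overline\nu(H_{i_0})<\nu(Q_{2,n})<\nu(Q_{s,n})$ for $s\ge 3$ (from $\nu(Q_{s,n})>\overline p_{s+l-1}^2\nu(Q_{s-1,n})$) forces $a_s=0$ for $s\ge 3$. Passing to the quotient $G(\nu(Q_{0,n}),\nu(Q_{1,n}),\nu(Q_{2,n}))/G(\nu(Q_{0,n}),\nu(Q_{1,n}))\cong\ZZ/\overline p_{2+l}\ZZ$, the class of $\overline\nu(H_{i_0})$ equals that of $\nu(Q_{2,n})$, a generator of this cyclic group, so $a_2\equiv 1\pmod{\overline p_{2+l}}$ and in particular $a_2\ge 1$; but then $\sum_i a_i\nu(Q_{i,n})\ge\nu(Q_{2,n})$, contradicting $\overline\nu(H_{i_0})<\nu(Q_{2,n})$. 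The main obstacle is precisely this last step: it rests on the identification $S^A(\nu)=\sum_{i\ge 0}\NN\nu(Q_{i,n})$ provided by the generating sequence, together with the sharp group-index data (\ref{eq41})--(\ref{eq42}) and the growth inequalities for $\nu(Q_{i,n})$ accumulated in Section \ref{SecGen} — without which the $\NN$-combinations in $S^A(\nu)$ could not be so tightly controlled.
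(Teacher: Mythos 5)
Your proposal is correct and follows essentially the same route as the paper: you factor $Q_{2,n}$ in $C$ using $\lambda$, show the distinguished factor $H_{i_0}$ has value $\nu(Q_{2,n})-(\overline p-1)e_{1,n}\nu(Q_{0,n})$, and rule this value out of $S^A(\nu)$ via the monotonicity of the $\nu(Q_{i,n})$ and the group index $[G(\nu(Q_{0,n}),\nu(Q_{1,n}),\nu(Q_{2,n})):G(\nu(Q_{0,n}),\nu(Q_{1,n}))]=\overline p_{2+l}$, exactly as in the paper's proof (whose $h_j$ are your $H_i$ up to harmless typos). The only cosmetic difference is that you locate the unique index $i_0$ by counting roots of $u^{\overline p}-(1+t)\overline\tau_{1,l}^{\overline p}$ and handle the coefficient $a_2$ by reducing modulo the subgroup, where the paper uses the value sum $\sum_j\overline\nu(h_j)=\nu(Q_{2,n})$ and first forces $a_2=0$; both variants are sound.
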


\begin{proof} Since $A=D_n$ in (\ref{eq11}), $A$ has regular parameters $z_n,w_n$ and a generating sequence
$$
Q_{0,n}=z_n, Q_{1,n}=w_n,Q _{2,n},\ldots
$$
defined by (\ref{eq12}), (\ref{eq13}) and (\ref{eq14}). so
$$
S^A(\nu)=S(\{\nu(Q_{i,n})\mid i\ge 0\}).
$$
We have 
\begin{equation}\label{eq45}
\nu(Q_{i+1,n})>\nu(Q_{i,n})
\end{equation}
for $i\ge 1$,
$$
[G(\nu(Q_{0,1}), \nu(Q_{1,n})):G(\nu(Q_{0,n})]=c_n
$$
 and
\begin{equation}\label{eq31}
[G(\nu(Q_{0,n}),\nu(Q_{1,n}),\ldots,\nu(Q_{i+1,n})):G(\nu(Q_{0,n}),\nu(Q_{1,n}),\ldots,\nu(Q_{i,n}))]=\overline p_{i+l+1}
\end{equation}
for $i\ge 1$.

By (\ref{eq13}), we have
$$
Q_{2,n}=Q_{1,n}^{\overline p_{1+l}c_n}-(1+t)\tau_{1,l}^{\overline p_{1+l}}Q_{0,1}^{\overline p_{1+l}e_{1,n}}
=w_n^{\overline p_{1+l}c_n}-(1+t)\tau_{1,l}^{\overline p_{1+l}}z_n^{\overline p_{1+l}e_{1,n}}.
$$
We have
$$
\gamma_1=\left[\frac{w_n^{c_n}}{z_n^{e_{1,n}}}\right]\in V_{\nu}/m_{\nu}\subset V_{\overline \nu}/m_{\overline \nu}.
$$
Let $\beta=[\lambda\tau_{i,l}]\in V_{\overline \nu}/m_{\overline \nu}$ which is nonzero since $\lambda$ and $\tau_{1,l}$ are units in $C$. There exists at most one index $j$ with $1\le j\le \overline p_{1+l}$ such that $\omega^j\beta=\gamma_1$. We have that 
$$
h_j=w_n^{\overline p_{1+l}}-\omega^j\lambda\tau_{1,j}z_n^{e_{1,n}}\in C
$$
for all $j$. If $\omega^j\beta\ne\gamma_1$, then $\overline \nu(h_j)=e_{1,n}\nu(z_n)$.
Since
$$
\sum_{j=1}^{\overline p_{1+l}}\overline \nu(h_j)=\nu(Q_{2,n})>\overline p_{1+l}e_{1,n}\nu(z_n),
$$
there exists a unique value of $j$ such that $\omega^j\beta=\gamma_1$, and $\overline\nu(h_j)>e_{1,n}\nu(z_n)$.
If $\overline \nu(h_j)\in S^A(\nu)$, we must then have that $\overline \nu(h_j)\in S(\nu(z_n),\nu(w_n))$, since 
$$
\overline\nu(h_j)=\nu(Q_{2,n})-(\overline p_{1+l}-1)e_{1,n}\nu(z_n)<\nu(Q_{2,n})
$$
 and by (\ref{eq45}).
Thus  $\nu(Q_{2,n})\in G(\nu(z_n),\nu(w_n))$, which is a contradiction to (\ref{eq31}).
\end{proof}

Let $\mu$ be a valuation of $V_{\nu}/m_{\nu}=k(t)[\{(1+t)^{\frac{1}{\overline p_i}}\}_{i\in \ZZ_+}]$ which is an extension of the $(t)$-adic valuation on $k[t]_{(t)}$. The value group of $\mu$ is $\ZZ$. Let $\phi$ be the composite valuation of $\nu$ and $\mu$ on $K$, so that the valuation ring of $\phi$ is $V_{\phi}=\pi^{-1}(V_{\mu})$ where $\pi:V_{\nu}\rightarrow V_{\nu}/m_{\nu}$ is the residue map (\cite[Section 10]{RTM}). The residue field of $\phi$ is $V_{\phi}/m_{\phi}=V_{\mu}/m_{\mu}\cong k$. Let $T_0=k[t,x,y]_{(t,x,y)}$, which is dominated by $\phi$.

\begin{Proposition}\label{Prop2} Suppose that $T$ is a  regular local ring of $K$ which dominates $T_0$ and is dominated by $\phi$. Then there exists a finite separable extension field $L$ of $K$ such that $T$ is unramified in $L$. Further, if $\overline\phi$ is an extension of $\phi$ to $L$, and if $U$ is the normal local ring of $L$ which lies over $T$ and is dominated by $\overline\phi$, then the following properties hold:
\begin{enumerate}
\item[1)] $U$ is a regular local ring
\item[2)]   the extension $T\rightarrow U$ is unramified with no residue field extension
\item[3)]  $S^U(\overline\phi)\ne S^T(\phi)$ under the natural inclusion $S^T(\phi)\subset S^{U}(\overline\phi)$.
\end{enumerate}
\end{Proposition}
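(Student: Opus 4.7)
The plan is to reduce to Proposition \ref{Prop1} by localizing $T$ at the center of $\nu$. Since $\phi$ is the composite $\nu\circ\mu$, one has $V_\phi \subset V_\nu$, hence $T \subset V_\nu$. Let $\mathfrak{p} = m_\nu \cap T$. Because $\nu(t) = 0$ while $\nu(x), \nu(y) > 0$, we have $t \notin \mathfrak{p}$ and $x, y \in \mathfrak{p}$; a direct check with rational functions in $x,y$ over $k(t)$ then shows $R_0 = k(t)[x,y]_{(x,y)} \subset A := T_{\mathfrak{p}}$ and that $A$ dominates $R_0$. Thus $A$ is a regular local ring dominating $R_0$ and dominated by $\nu$, so the construction of Section \ref{SecGen} applies to $A$: there is a factorization $R_0 \to R_l = D_0 \to \cdots \to D_n = A$ of the form (\ref{eq11}).

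Set $\overline p = \overline p_{1+l}$, let $\lambda$ be a $\overline p$-th root of $1+t$ as in (\ref{eq51}), and put $L = K(\lambda)$. Given an extension $\overline\phi$ of $\phi$ to $L$, let $\overline\nu$ be the extension of $\nu$ induced by the quotient map $\Phi_{\overline\phi} \twoheadrightarrow \Phi_{\overline\nu}$ coming from the composite structure. The discriminant computation from the paragraph preceding Proposition \ref{Prop1} gives $D_{L/K}(1,\lambda,\ldots,\lambda^{\overline p-1}) = \pm\, \overline p^{\overline p}(1+t)^{\overline p-1}$, which is a unit in $T$: $\overline p \ne \text{char}(k)$ makes $\overline p$ invertible in $T$, and $t \in m_T$ makes $1+t$ a unit. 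Hence $T \to T[\lambda]$ is unramified, and $U = T[\lambda]_{m_{\overline\phi} \cap T[\lambda]}$ is a regular local ring whose maximal ideal is generated by $m_T$, with residue field $k$ (since $k$ is algebraically closed). This gives (1) and (2).

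For (3), apply Proposition \ref{Prop1} to $A$ with this choice of $\lambda$: $C := A[\lambda]_{m_{\overline\nu} \cap A[\lambda]}$ satisfies $S^C(\overline\nu) \ne S^A(\nu)$. Every $s \in T \setminus \mathfrak{p}$ has $\overline\nu(s) = \nu(s) = 0$ and is therefore already a unit in $T[\lambda]_{m_{\overline\nu} \cap T[\lambda]}$, so this localization equals $C$. Since $m_{\overline\nu} \subset m_{\overline\phi}$, we have $U \subset C$ with $C$ a further localization of $U$. Pick $h \in C$ with $\overline\nu(h) \notin S^A(\nu)$ and write $h = h'/s$ with $h', s \in T[\lambda] \subset U$ and $\overline\nu(s) = 0$; then $\overline\nu(h') = \overline\nu(h) \notin S^A(\nu)$.

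The descent to $\phi$ is the key step. The composite structure $V_{\overline\phi} \subset V_{\overline\nu}$ induces a surjection $\Phi_{\overline\phi} \twoheadrightarrow \Phi_{\overline\nu}$ sending $\overline\phi(g) \mapsto \overline\nu(g)$ for every $g \in L^*$. If $\overline\phi(h') = \phi(f)$ for some $f \in T \setminus \{0\}$, then projecting to $\Phi_{\overline\nu}$ gives $\overline\nu(h') = \nu(f) \in S^A(\nu)$ (since $f \in T \subset A$), contradicting our choice of $h'$. Hence $\overline\phi(h') \in S^U(\overline\phi) \setminus S^T(\phi)$, giving (3). The main obstacle is that Proposition \ref{Prop1} produces an element of $C$ rather than of $U$; clearing its denominator inside $U$ while preserving the $\overline\nu$-value (which works since $\overline\nu(s) = 0$) is what lets the composite-valuation projection argument rule out $\overline\phi(h') \in S^T(\phi)$.
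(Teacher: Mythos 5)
Your proposal is correct and follows essentially the same route as the paper: localize $T$ at $m_\nu\cap T$ to get $A$, take the same extension $L=K(\lambda)$ with the discriminant computation for (1) and (2), and deduce (3) from Proposition \ref{Prop1} via the compatibility of the projections $\Phi_{\overline\phi}\rightarrow\Phi_{\overline\nu}$ and $\Phi_\phi\rightarrow\Phi_\nu$. Your element-wise denominator-clearing argument for (3) is just an explicit form of the paper's observation that the maps $S^T(\phi)\rightarrow S^A(\nu)$ and $S^U(\overline\phi)\rightarrow S^C(\overline\nu)$ are surjective, so there is no substantive difference.
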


\begin{proof} Let $R_0=(T_0)_{m_{\nu}\cap T_0}=k(t)[x,y]$ and $A=T_{m_{\nu}\cap T}$. By consideration of the factorization $R_0\rightarrow R_l\rightarrow A=D_n$ in (\ref{eq11}), 
let $\lambda$ be a $\overline p_{1+l}$-th root of $1+t$ in an extension field of $K$, and let $L=K(\lambda)$ (as in (\ref{eq51})). Let $\overline\phi$ be an extension of $\phi$ to $L$ and let $\overline\nu$ be the extension of $\nu$ to $L$ with which $\overline\phi$ is composite.

Let $U$ be the normal local ring of $L$ which is dominated by $\overline\phi$ and lies over $T$. Then  $U$ is a regular local ring and the extension $T\rightarrow U$ is unramified with no residue field extension, by the argument before Proposition \ref{Prop1} and since $V_{\overline\phi}/m_{\overline\phi}=V_{\phi}/m_{\phi}=k$ by  \cite[Corollary 2, page 26]{ZS2}. Let $C=U_{m_{\overline \nu}\cap U}$. Then  $C$ is a  regular local ring and $A\rightarrow C$ is unramified (by the argument before Proposition \ref{Prop1}). We have that
\begin{equation}\label{eq52}
S^C(\overline\nu)\ne S^A(\nu)
\end{equation}
 by Proposition \ref{Prop1}.

By the explanation on  \cite[page 56]{RTM} or  \cite[Theorem 17, page 43]{ZS2}, we have a commutative diagram of homomorphisms of value groups,
where the horizontal sequences are short exact and the vertical arrows are injective,
$$
\begin{array}{ccccccccc}
0&\rightarrow&\Phi_{\mu}&\rightarrow&\Phi_{\phi}&\rightarrow & \Phi_{\nu}&\rightarrow &0\\
&&\downarrow&&\downarrow&&\downarrow&&\\
0&\rightarrow&\Phi_{\overline\mu}&\rightarrow&\Phi_{\overline\phi}&\rightarrow & \Phi_{\overline\nu}&\rightarrow &0,
\end{array}
$$
which induce a commutative diagram of homorphisms of semigroups, where the horizontal arrows are surjective and the vertical are injective,
$$
\begin{array}{ccc}
S^T(\phi)&\rightarrow&S^A(\nu)\\
\downarrow&&\downarrow\\
S^U(\overline\phi)&\rightarrow&S^C(\overline\nu).
\end{array}
$$
We will show that the first horizontal arrow is surjective. The proof for the second horizontal arrow is the same. Suppose $\alpha\in S^A(\nu)$. Then there exists $f\in A$ such that $\nu(f)=\alpha$. There exists $g\in T\setminus (x,y)$ such that $gf\in T$. Thus $\phi(g)\in \Phi_{\mu}$ and $\phi(gf)=\phi(g)+\phi(f)$ so  $\phi(gf)\in S^T(\phi)$ maps onto $\nu(f)=\alpha$.
Thus $S^T(\phi)\ne S^U(\overline\phi)$ by (\ref{eq52}).
\end{proof}

\section{Proofs of Theorems \ref{Theorem1} and \ref{Theorem2} }\label{SecCounter}

We first give the proof of Theorem \ref{Theorem1}. 

A Henselization $T^h$ of $T$ can be constructed as follows, as is explained in  \cite[Chapter VII]{N}. Let $N$ be a separable closure of $K$. Then $N$ is an (infinite)  Galois extension of $K$ with Galois group $G(N/K)$. Let $E$ be a local ring of the integral closure of $T$ in $N$, and let
$$
G^s(E/T)=\{\sigma\in G(N/K)\mid \sigma(E)=E\}.
$$
A Henselization $T^h$ of $T$ is then
$T^h=E^{G^s(E/T)}$, which is a local ring of the fixed field $M=N^{G^s(E/T)}$ of $G^s(E/T)$ which lies over $T$.

Let $K\rightarrow L$ be the field extension of Proposition \ref{Prop2}. Choose an embedding $K\rightarrow L\rightarrow N$ of $L$ as a subfield of $N$, and let $U$ be the local ring of the integral closure of $T$ in $L$ which is dominated by $E$. By Proposition \ref{Prop2}, $U$ is unramified over $T$ with no residue field extension. Thus $G^s(E/T)\subset G(N/L)$ (c.f.  \cite[Section 2]{Ab2} or   \cite[Section 4]{C1}). Thus we have that 
$$
L=N^{G(N/L)}\subset N^{G^s(E/T)}=M.
$$
Thus $U$ is dominated by $T^h$ since $U=E\cap L$ and $T^h=E\cap M$. Let $\overline\phi=\phi^h|L$. Then $\overline \phi$ dominates $U$ and $T^h$ dominates $U$, so $S^U(\overline\phi)\subset S^{T^h}(\phi^h)$. But $S^U(\overline\phi)\ne S^T(\phi)$ by Proposition \ref{Prop2}, so $S^{T^h}(\phi^h)\ne S^T(\phi)$.
\vskip .2truein
We now give the proof of Theorem \ref{Theorem2}.

 By  \cite[Theorem 43.5]{N}, the completion $\hat T$ of $T$ is a Henselian local ring, and so by  \cite[Theorem 30.3]{N}, $\hat T$ dominates the Henselization $T^h$ of $T$. 

Now $T^h=F_m$ where $F$ is the integral closure of $T$ in the quotient field $M$ of $T^h$ and $m$ is a maximal ideal of $F$. Suppose $q$ is a nonzero prime ideal of $T^h$. Then there exists a nonzero element $f\in q$, so that $f=\frac{g}{h}$ where $g,h\in F$ and $h\not\in m$. Now $g$ is integral over $T$ and $T$ is  normal, so the norm $N_{K(g)/K}(g)$ satisfies
$0\ne N_{K(g)/K}(g)\in q\cap T$ by  \cite[Theorem 4, page 260]{ZS1} and  \cite[formula (15) on page 91]{ZS1}. Thus $q\cap T\ne (0)$. 

Suppose there exists a prime ideal $p$ in $\hat T$ with an extension $\hat\phi$ of $\phi$ to the quotient field of $\hat T/p$ which dominates $\hat T/p$ such that $S^{\hat T/p}(\hat\phi)=S^T(\phi)$.
Then $p\cap T^h=(0)$, and so $\hat T/p$ dominates $T^h$. Let $\phi^h$ be the restriction of $\hat\phi$ to the quotient field of $T^h$. We then have natural inclusions
$$
S^T(\phi)\subset S^{T^h}(\phi^h)\subset S^{\hat T/p}(\hat\phi).
$$
But $S^T(\phi)\ne S^{T^h}(\phi^h)$ by Theorem \ref{Theorem1}, giving a contradiction to our assumption that $S^{\hat T/p}(\hat\phi)=S^T(\phi)$.


\begin{thebibliography}{1000000000}
\bibitem{Ab1} S. Abhyankar, On the valuations centered in a local domain, Amer. J. Math. 78 (1956), 321 - 348.
\bibitem{Ab2} S. Abhyankar, Local uniformization on algebraic surfaces over ground fields of characteristic $p\ne 0$, Annals Math. 63 (1956), 491 - 526.
\bibitem{RTM} S. Abhyankar, Ramification theoretic methods in algebraic geometry, Princeton Univ. Press, 1959.
\bibitem{C2}  S.D. Cutkosky, Local factorization and monomialization of morphisms, Ast\'erisque
260, 1999.\bibitem{C1} S.D. Cutkosky, Finite generation of extensions of associated graded rings along a valuation, to appear in the Journal of the London Math. Soc.
\bibitem{CE} S.D. Cutkosky and S. El Hitti, Formal prime ideals of infinite value and their algebraic resolution, Ann. Fact. Sci. Toulouse Math. 19 (2010), 635 - 649.
\bibitem{CG} S.D. ~Cutkosky and L. ~Ghezzi,  \textit{Completions of valuation rings},
Contemp. Math. \textbf{386} (2005), 13 - 34.
\bibitem{CV} S.D. Cutkosky and Pham An Vinh, Valuation semigroups of two dimensional  local rings, Proceedings of the London Mathematical Society 108 (2014), 350 - 384.
\bibitem{Eh} S. El Hitti, Perron transforms, Comm. Algebra 42 (2014), 2004 - 2045.
\bibitem{HS} W. Heinzer and J. Sally, Extensions of valuations to the completion of a local domain, J. Pure and Appl. Algebra 71 (1991), 175 - 185.
\bibitem{GAST} F.J. Herrera Govantes, F.J. Olalla Acosta, M. Spivakovsky, G. Teissier, Extending a valuation centered in a local domain to its formal completion, Proc. London Math. Soc. 105 (2012), 571 - 621.
\bibitem{K} O. Kashcheyeva, Constructing examples of semigroups of valuations, J. Pure Appl. Algebra 200 (2016), 3826 - 3860.
\bibitem{L} S. Lang, Algebra, Revised third edition, Springer Verlag, 2002
\bibitem{Mo} M. Moghaddam, A construction for a class of valuations of the field $K(X_1,\ldots, X_d,Y)$ with large value group, Journal of Algebra, 319, 7 (2008), 2803-2829.
\bibitem{N} M. Nagata, Local Rings, Interscience publishers, New York, London, 1962.
\bibitem{NS} J. Novacoski and M. Spivakovsky, Key polynomials and pseudo-convergent sequences, J. Algebra 495 (2018), 199 - 219.
\bibitem{S} M. Spivakovsky, Valuations in function fields of surfaces, Amer. J. Math. 112 (1990), 107 - 156.
 \bibitem{T1} B. Teissier,   Valuations, deformations and toric geometry, Valuation theory and its applications II,
F.V. Kuhlmann, S. Kuhlmann and M. Marshall, editors, Fields
Institute Communications 33 (2003), Amer. Math. Soc., Providence, RI, 361
-- 459.
\bibitem{T2} B. Teissier, Overweight deformations of affine toric varieties and local uniformization, in Valuation theory in interaction, Proceedings of the second international conference on valuation theory, Segovia-El Escorial, 2011. Edited by A. Campillo, F-V- Kehlmann and B. Teissier. European Math. Soc. Publishing House, Congress Reports Series, Sept. 2014, 474 - 565.
\bibitem{ZS1} O. Zariski and P. Samuel, Commutative Algebra Volume I, Van Nostrand, 1958.
\bibitem{ZS2} O. Zariski and P. Samuel, Commutative Algebra Volume II, Van Nostrand, 1960.
\end{thebibliography}
\end{document}